\documentclass[11pt]{amsart}

\usepackage[margin=1.15in]{geometry}
\usepackage{amsmath,amssymb,amsthm,mathtools}
\usepackage{enumitem}
\usepackage{xcolor}
\usepackage[hidelinks]{hyperref}
\usepackage[nameinlink,capitalize]{cleveref}

\newtheorem{theorem}{Theorem}[section]
\newtheorem{lemma}[theorem]{Lemma}

\theoremstyle{definition}

\newtheorem{remark}[theorem]{Remark}

\newcommand{\Z}{\mathbb{Z}}

\title[A sharp $5/8$ bound for Erd\H{o}s 865]{A sharp $5/8$ bound for an Erd\H{o}s--S\'{o}s pairwise-sums problem}

\author{Ricky Cipollini}
\thanks{This manuscript was written by GPT-5.5 Pro from a proof developed by Ricky Cipollini together with GPT-5.5 Pro. The associated Lean formalization was carried out with Aristotle.}

\date{June 2026}

\begin{document}

\begin{abstract}
Let $f_3(N)$ be the least integer such that every set $A\subseteq \{1,\ldots,N\}$ of size at least $f_3(N)$ contains distinct $a,b,c\in A$ with
\[
  a+b,\quad a+c,\quad b+c\in A.
\]
We prove
\[
  f_3(N)\leq \frac58 N+O(1).
\]
Together with the standard construction $[N/8,N/4]\cup [N/2,N]$, this gives $f_3(N)=5N/8+O(1)$ and in particular resolves Erd\H{o}s Problem 865. The proof is self-contained. An earlier conditional version of the reduction has also been formalized in Lean 4/Mathlib with no sorries and no added axioms.
\end{abstract}

\maketitle

\section{Introduction}

For a positive integer $N$, let $[1,N]$ denote the set of integers $\{1,\ldots,N\}$. We say that $A\subseteq [1,N]$ contains a \emph{pairwise-sum triple} if there are distinct $a,b,c\in A$ such that
\[
  a+b,\quad a+c,\quad b+c \in A.
\]
Let $f_3(N)$ be the least integer such that every $A\subseteq[1,N]$ with $|A|\geq f_3(N)$ contains such a triple.

The problem considered here is the $k=3$ case of an Erd\H{o}s--S\'{o}s conjecture on pairwise sums. The example
\[
  A=\left[\frac N8,\frac N4\right]\cup\left[\frac N2,N\right]
\]
shows that the constant $5/8$ is best possible up to lower-order terms. The goal is to prove the matching upper bound.

The proof is self-contained. We prove the following theorem.

\begin{theorem}\label{thm:main}
There is a constant $C>0$ such that for all $N$, every $A\subseteq[1,N]$ with
\[
  |A|\geq \frac58N+C
\]
contains a pairwise-sum triple.
\end{theorem}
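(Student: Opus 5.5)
We argue by contradiction. Suppose $A\subseteq[1,N]$ has no pairwise-sum triple and $|A|\ge \frac58N+C$. The plan is to split $A$ at a threshold into a ``large'' part $A_{\mathrm{hi}}=A\cap(N/2,N]$ and a ``small'' part $A_{\mathrm{lo}}=A\cap[1,N/2]$. We then show the two parts cannot simultaneously be as dense as the counting requires. The first structural input is elementary. If $a<b<c$ are in $A$ with $a+b,a+c,b+c\in A$, then $b+c\le N$. So only elements below $N/2$ can play the role of $b$, and the roles of $b,c$ are constrained by the sumset structure.

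The key object is, for each $x\in A_{\mathrm{lo}}$, the set $S_x=\{y\in A: y\ne x,\ x+y\in A\}$ of partners of $x$. Triple-freeness says exactly that the ``sum graph'' $G$ on $A$, with edges $xy$ for $x+y\in A$, contains no triangle. Since $A$ has density above $1/2$ in $[1,N]$, a shifting/counting argument shows many edges. Concretely, for each $x\le N/2$ the sets $A$ and $A-x$ inside $[1,N-x]$ have $|A\cap(A-x)|\ge |A|-x-O(1)$. Hence $\deg_G(x)\ge |A|-x$. Triangle-freeness then forces the neighbourhoods of adjacent vertices to be disjoint: $|N(x)|+|N(y)|\le |A|$ for every edge $xy$. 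Combining these gives $x+y\ge |A|-O(1)$ whenever $x,y\in A_{\mathrm{lo}}$ and $x+y\in A$. Thus the small elements are nearly sum-free relative to $A$ above a threshold.

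Next, the plan is to run a careful accounting in the style of the $[N/8,N/4]\cup[N/2,N]$ extremal example. Let $m=\min A$ and $t=|A_{\mathrm{lo}}|$. The disjointness bound shows that $A_{\mathrm{lo}}\cap[1,|A|/2)$ is essentially sum-free into $A$. For $x$ in that range, $x+A_{\mathrm{lo}}$ must avoid $A$ except on a small set, which pushes elements of $A_{\mathrm{hi}}$ out. One then optimizes: if $A_{\mathrm{lo}}$ has $t$ elements spread over an interval $[m,M]$, the translates $x+A_{\mathrm{lo}}$ for $x\in A_{\mathrm{lo}}$ cover a set of size at least $\min(2t-1,\dots)$ by Cauchy--Davenport/Freiman-type bounds on $A_{\mathrm{lo}}+A_{\mathrm{lo}}$. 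This sumset must largely lie in $[1,N]\setminus A$. This gives $|A_{\mathrm{hi}}|\le N/2-(2t-1)+(\text{overlap with }[1,N/2])$, and together with $t\le$ (room in $[1,N/2]$ minus forbidden sums) yields $|A|\le 5N/8+O(1)$. The extremal case corresponds to $A_{\mathrm{lo}}=[N/8,N/4]$, where $A_{\mathrm{lo}}+A_{\mathrm{lo}}=[N/4,N/2]$ is exactly the excluded middle range.

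The main obstacle is the middle step: we must show rigorously that the ``edges'' among small elements are forced to have sum $\gtrsim |A|$, and then handle elements of $A_{\mathrm{lo}}$ in the range $(N/4,N/2]$. Those elements can have sums landing in $A_{\mathrm{hi}}$ legitimately, and the naive sumset bound loses there. I would first try a case split on whether $A\cap(N/4,N/2]$ is small. If it is large, I would use the triangle-free degree bound directly to get a contradiction. If it is small, I would reduce to the sumset inequality $|A_{\mathrm{lo}}+A_{\mathrm{lo}}|\ge 2|A_{\mathrm{lo}}|-1$ applied to $A\cap[1,N/4]$. If Cauchy--Davenport is too weak when $A_{\mathrm{lo}}$ is not an interval, I would fall back on Freiman's $3k-4$ theorem to reduce to near-interval structure and then finish by direct counting.
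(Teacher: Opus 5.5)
\textbf{There is a genuine gap: the key step of your argument is false.} For $x\le N/2$, inclusion--exclusion inside $[1,N-x]$ gives only $|A\cap(A-x)|\ge 2(|A|-x)-(N-x)=2|A|-N-x$. At density $5/8$ this is about $N/4-x$, not $|A|-x-O(1)$.

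The extremal set already violates your bound. Take $A=[N/8,N/4]\cup[N/2,N]$ with $8\mid N$; it is triple-free and has density above $1/2$. For $x=N/8$ the partners of $x$ are exactly $[N/2,7N/8]$, so $\deg_G(x)=3N/8+1$, whereas $|A|-x=N/2+2$.

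Use the correct degree estimate together with $\deg_G(x)+\deg_G(y)\le |A|$ for an edge $xy$. This yields only $x+y\ge 3|A|-2N\approx -N/8$, which says nothing. So you never establish that small elements are (nearly) sum-free into $A$. For the same reason, the branch ``if $A\cap(N/4,N/2]$ is large, use the triangle-free degree bound directly'' has no argument behind it. You flag this middle step as the main obstacle, and it is: it does not hold in the form you need.

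\textbf{The accounting step would not reach $5/8$ even if sum-freeness were granted.} Suppose $A_{\mathrm{lo}}\hat{+}A_{\mathrm{lo}}$ (sums of distinct elements, of size at least $2t-3$) avoids $A$. Counting in $[1,N]$ then gives $|A|\le\min(N/2+t,\;N-2t+3)$, i.e.\ only $|A|\le 2N/3+O(1)$. Cauchy--Davenport or Freiman-type input cannot distinguish $2/3$ from $5/8$.

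In the special sub-case $A\cap(N/4,N/2]=\varnothing$, a sum-free-set bound in $[1,N/4]$ would give $t\le N/8+O(1)$. There, however, the whole difficulty is proving sum-freeness, and that requires using the top half. What is missing is a mechanism that works when $A\cap(N/4,N/2]$ is substantial and $A\cap(N/2,N]$ is far from full.

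\textbf{How the paper supplies this mechanism.} It folds around a pivot $h\in A$ adjacent to the empty gap $(p,q)$ around $N/2$. The set $B_h=\{r<h:\ r,\,h+r\in A\}$ then satisfies $x+y\neq h$ and $x+y\notin B_h \pmod h$ for distinct $x,y\in B_h$.

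An induction in which the four sets $B_h,\,-B_h,\,B_h-\alpha,\,\beta-B_h$ are nearly disjoint in $\Z/h\Z$ gives $|B_h|-|C(B_h)|\le h/4+2$. Hence $|A\cap[1,2h]|\le \frac54 h+O(1)$. The choice of pivot ($s\le 4e$ versus $s>4e$), together with induction on $H$ applied to $A\cap[1,t]$, controls the remaining tail. Nothing in your proposal plays the role of this folded bound.
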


Equivalently, every pairwise-sum-triple-free set $A\subseteq[1,N]$ satisfies
\[
  |A|\leq \frac58N+O(1).
\]
All implicit constants in the proof are absolute.

\begin{remark}[Formalization]
An earlier version of the reduction from a coarse $2/3$ theorem to \Cref{thm:main} has been formalized in Lean 4/Mathlib with no sorries and no added axioms. The formalization proves a slightly weaker version of the folded additive lemma, the folding reduction, the centering argument, and the sharpness construction. It is currently outdated for the present version of the paper, since Lemma~\ref{lem:folded} has been sharpened and \Cref{sec:upper-bound} below replaces the coarse theorem by an induction argument.
\end{remark}

\section{The folded additive lemma}

We first prove a finite additive lemma on a folded interval. Let $m\geq 2$ and let
\[
  B\subseteq \{1,\ldots,m-1\}.
\]
Assume that for all distinct $x,y\in B$,
\begin{equation}\label{eq:foldedOK}
  x+y\neq m,
  \qquad
  x+y\notin B\pmod m.
\end{equation}
Define
\[
  S_0(B)=\{x+y:x\neq y,\ x,y\in B,\ x+y<m\},
\]
\[
  S_1(B)=\{x+y-m:x\neq y,\ x,y\in B,\ x+y>m\},
\]
and
\[
  C(B)=S_0(B)\cap S_1(B).
\]
Thus $C(B)$ is the set of residues which occur both as a non-wrapped and as a wrapped pair-sum of two distinct elements of $B$.

\begin{lemma}[Folded additive lemma]\label{lem:folded}
For every $m$ and $B$ satisfying \eqref{eq:foldedOK},
\[
  |B|-|C(B)|\leq \frac m4+2.
\]
\end{lemma}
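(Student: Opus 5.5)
We work in $\Z/m\Z$ with $B\subseteq\{1,\dots,m-1\}$ viewed as residues. Condition \eqref{eq:foldedOK} says two things. First, $B$ contains no pair $\{x,m-x\}$ with $x\ne m-x$. Second, the restricted sumset $B\hat{+}B$ is disjoint from $B$ modulo $m$. Every restricted pair-sum $x+y$ lies in $(0,m)$ or in $(m,2m)$, since $x+y=m$ is excluded. So the set of residues of restricted sums is exactly $S_0\cup S_1$, and the "double-counted" residues form $C=S_0\cap S_1$. Hence
\[
|S_0|+|S_1| \;=\; |S_0\cup S_1| + |C|, \qquad S_0\cup S_1 \subseteq \{1,\dots,m-1\}\setminus (B\cup(-B)) ,
\]
with one caveat. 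We need $-B$ excluded as well, which I would get from a reflection argument: the map $x\mapsto m-x$ sends $S_0(B)$ to $S_1(m-B)$. Instead I plan to use the involution directly on the sums, using that $B\cap(m-B)=\emptyset$.

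\textbf{Step 1 (counting).} The aim is an inequality of the shape $|S_0|+|S_1|\ge 2|B|-O(1)$, obtained from a lower bound for sums in the integers. Order $B=\{b_1<\dots<b_k\}$. The integer restricted sumset $B\hat+B\subseteq[2,2m-2]$ has at least $2k-3$ elements, and it splits into the part below $m$ (giving $S_0$) and the part above $m$ (giving $S_1$, shifted by $m$). So $|S_0|+|S_1|\ge 2k-3$, and therefore
\[
2k-3-|C|\;\le\; |S_0\cup S_1| \;\le\; (m-1)-|B\cup(m-B)|+\epsilon = m-1-2k+\epsilon.
\]
This needs $S_0\cup S_1$ to avoid $m-B$, not just $B$. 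That is not given by \eqref{eq:foldedOK}, so this is the main obstacle. Without it we only get $3k-|C|\le m+O(1)$, i.e.\ a $m/3$-type bound, and the sharp $m/4$ requires using the symmetry.

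\textbf{Step 2 (symmetrize).} To handle the obstacle, I would replace $B$ by the symmetric set $D=B\cup(-B)$ of size $2k$ and count sums of $B$ with $D$. The sums $b_i-b_j$ correspond to differences. Note that $x+y\in B$ is forbidden, and also $x+y\in -B$ would mean $x+y+z\equiv 0$, which is not forbidden. So instead I would exploit the order structure: $S_0$ sums lie in $(b_1+b_2,\,m)$ and $S_1$ sums in $(0,\,b_{k-1}+b_k-m)$. I would then count how many elements of $B$ and $m-B$ can lie in $S_0$ and $S_1$ via an interval argument, taking $S_0\setminus C$ and $S_1\setminus C$ to lie in essentially disjoint ranges. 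Concretely: all elements of $B$ less than $b_1$ together with $S_1\setminus$(stuff) and $B$, and so on. The refined inequality I aim for is
\[
|B|+|S_0\cup S_1|+\bigl(|B|-O(1)\bigr)\le m \quad\text{and}\quad |S_0\cup S_1|\ge 2|B|-|C|-O(1),
\]
where the middle $|B|$ comes from showing that the translates $b_1+B$ (in $S_0$ or wrapping) and $B$ are disjoint from a further copy derived from $m-B$ via the no-$\{x,m-x\}$ condition. Combining these gives $4|B|-|C|\le m+O(1)$, which is only $|B|-|C|/4$. To get the full $|B|-|C|$, I would instead prove $|S_0|+|S_1|\ge 4|B|-8-\dots$ by using the two Freiman-type bounds in each fold separately: $|S_0|\ge 2|B_{<m/2}|-3$-like estimates for the small half and for the large half.

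I expect the genuinely hard part to be the case analysis establishing the needed disjointness of $S_0\cup S_1$, $B$, and a reflected copy. I would first try splitting $B=B_L\cup B_H$ at $m/2$, noting $|B_L|+|B_H|\le$ the number of antipodal classes, and apply the $2k-3$ bound to $B_L\hat+B_L\subseteq S_0$ and $B_H\hat+B_H-m\subseteq S_1$.
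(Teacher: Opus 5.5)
There is a genuine gap: nothing in your proposal gets below the $m/3$ barrier. The only inequality you actually establish is $|S_0|+|S_1|=|B\hat{+}B|\geq 2|B|-3$. Together with $(S_0\cup S_1)\cap B=\varnothing$, this gives $3|B|-|C|\leq m+2$. Your two concrete routes to $m/4$ rest on false claims.

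Take, for large $m$, $B=\Z\cap(m/4,m/2)$. It satisfies \eqref{eq:foldedOK}, has $S_1(B)=C(B)=\varnothing$, and $S_0(B)$ is an interval of $2|B|-3$ integers filling $(m/2,m)$ up to $O(1)$ points. Two things follow:
\begin{itemize}
\item $S_0\cup S_1$ contains all but $O(1)$ elements of $-B=(m/2,3m/4)$, so the disjointness from $-B$ that Step~1 needs fails.
\item $|S_0|+|S_1|=2|B|-3$, so the target $|S_0|+|S_1|\geq 4|B|-O(1)$ of Step~2 fails too. Splitting at $m/2$ only gives $2|B|-6$, weaker than what you started with.
\end{itemize}
Since this example is essentially extremal for the lemma, the restricted sumset is the wrong object to pack against $B$ and $-B$.

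The remaining inequality $|B|+|S_0\cup S_1|+|B|\leq m+O(1)$ is not argued. The translate $b_1+B$ you invoke is not controlled against $-B$; in this example the two essentially coincide. The reason is that $b_1+b\equiv -b'$ is a zero-sum relation, which \eqref{eq:foldedOK} does not exclude, and the ``no $\{x,m-x\}$'' condition only concerns $B\cap(-B)$. Note also that $4|B|-|C|\leq m+O(1)$ would have sufficed rather than being ``only'' a bound on $|B|-|C|/4$, because $|B|-|C|\leq|B|-|C|/4$.

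The missing idea is to pack difference-type copies of $B$ instead of sumsets, and to absorb $C$ by induction on $|B|$. After reflecting $B\mapsto -B$ if necessary (this preserves \eqref{eq:foldedOK} and $|C|$), the two smallest elements $\alpha<\beta$ of $B$ satisfy $\alpha+\beta<m$.
\begin{itemize}
\item If $\alpha+\beta\in C(B)$, delete $\alpha$. Every low representation of $\alpha+\beta$ uses $\alpha$, so $|C|$ drops by at least one and $|B|-|C|$ does not increase.
\item If $\alpha+\beta\notin C(B)$, consider $B$, $-B$, $(B-\alpha)\setminus\{0\}$, $(\beta-B)\setminus\{0\}$ inside $(\Z/m\Z)\setminus\{0\}$, of total size $4|B|-2$. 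Any coincidence between two of them other than the last pair is a relation $x+y\equiv z$ or $x+y\equiv 0$ with $x,y,z\in B$. Such a relation is forbidden unless $x=y$, so these overlaps are $O(1)$. A coincidence between the last two means $b+b'\equiv\alpha+\beta$, and there are $O(1)$ such representations precisely because $\alpha+\beta\notin C(B)$.
\end{itemize}
The total overlap is at most $8$, so inclusion--exclusion gives $4|B|-2\leq m+7$, and a parity check yields $|B|\leq m/4+2$. In the interval example these four sets are essentially the quarters $(0,m/4)$, $(m/4,m/2)$, $(m/2,3m/4)$, $(3m/4,m)$, which is why $1/4$ is the sharp constant.
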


\begin{proof}
We induct on $|B|$.

First observe the reflection symmetry. Put
\[
  -B=\{m-b:b\in B\}\subseteq \{1,\ldots,m-1\}.
\]
Replacing $B$ by $-B$ preserves \eqref{eq:foldedOK}; low sums for $B$ become high sums for $-B$, and high sums for $B$ become low sums for $-B$. Hence
\begin{equation}\label{eq:collision-reflection}
  |C(-B)|=|C(B)|.
\end{equation}

Assume $|B|\geq 2$. After replacing $B$ by $-B$ if necessary, we may suppose that the two smallest elements of $B$,
\[
  \alpha<\beta,
\]
satisfy
\begin{equation}\label{eq:small-sum}
  \alpha+\beta<m.
\end{equation}
Indeed, if the two smallest elements of $B$ have sum at least $m$, then the two largest elements $v<u$ of $B$ have $u+v>m$, since equality would give a forbidden sum equal to $m$. Reflecting sends them to the two smallest elements $m-u<m-v$ of $-B$, whose sum is $2m-u-v<m$.

We now split into two cases.

\smallskip
\noindent\textbf{Case 1: $\alpha+\beta\in C(B)$.}
Let
\[
  B'=B\setminus\{\alpha\}.
\]
Since $\alpha$ and $\beta$ are the two smallest elements of $B$, every low representation of $\alpha+\beta$ uses $\alpha$. Therefore, after deleting $\alpha$, the residue $\alpha+\beta$ no longer lies in $C(B')$. Deleting elements cannot create new collisions, so
\[
  |C(B')|\leq |C(B)|-1.
\]
Consequently
\[
  |B|-|C(B)|\leq |B'|-|C(B')|.
\]
The induction hypothesis applied to $B'$ gives the desired bound.

\smallskip
\noindent\textbf{Case 2: $\alpha+\beta\notin C(B)$.}
Work in the cyclic group $\Z/m\Z$. Consider the four sets
\[
  T_1=B,
  \qquad
  T_2=-B,
  \qquad
  T_3=(B-\alpha)\setminus\{0\},
  \qquad
  T_4=(\beta-B)\setminus\{0\},
\]
where these are viewed as subsets of $\Z/m\Z$. Their total size is
\begin{equation}\label{eq:four-total}
  |T_1|+|T_2|+|T_3|+|T_4|=4|B|-2.
\end{equation}
We record the required intersection bounds. First,
\[
  |T_1\cap T_2|\leq 1,
\]
and if $m$ is odd then $T_1\cap T_2=\varnothing$, since an element in this intersection gives two elements of $B$ whose sum is $m$, except possibly the single residue $m/2$. Also
\[
  |T_1\cap T_3|\leq 1,
\]
since $x\in T_1\cap T_3$ gives $x\in B$ and $x\equiv b-\alpha$ for some $b\in B$, so
\[
  \alpha+x\equiv b\pmod m;
\]
unless $x=\alpha$, this is a forbidden sum of two distinct elements of $B$ landing back in $B$. Similarly,
\[
  |T_1\cap T_4|\leq 2,
\]
since such an intersection gives $x+b\equiv \beta\pmod m$ with $x,b\in B$; unless $x=b$, this is forbidden, while $x=b$ forces $2x\equiv \beta\pmod m$, which has at most two solutions modulo $m$.

Next,
\[
  |T_2\cap T_3|\leq 1.
\]
Indeed, such an intersection gives $u+b\equiv \alpha\pmod m$ with $u,b\in B$. Unless $u=b$, this is forbidden, so $2u\equiv\alpha\pmod m$. This congruence has at most one solution in $B$: if $m$ is odd there is only one residue solution, while if $m$ is even then either there is no solution, or the two residue solutions are $\alpha/2$ and $\alpha/2+m/2$; the first is smaller than $\alpha$, so at most the second can lie in $B$.
Finally,
\[
  |T_2\cap T_4|\leq 1;
\]
indeed, such an intersection gives a congruence $u+\beta\equiv b\pmod m$ with $u,b\in B$, which is forbidden unless $u=\beta$.

It remains to bound $T_3\cap T_4$. Suppose
\[
  b-\alpha\equiv \beta-b'\pmod m
\]
with $b,b'\in B$. Then
\[
  b+b'\equiv \alpha+\beta\pmod m.
\]
Since $0<\alpha+\beta<m$, this means either
\[
  b+b'=\alpha+\beta
\]
or
\[
  b+b'=m+\alpha+\beta.
\]
The zero omissions in the definitions of $T_3$ and $T_4$ mean that $b\neq\alpha$ and $b'\neq\beta$. Thus in the first equality, since $\alpha,\beta$ are the two smallest elements of $B$, the only possible ordered contribution is $(b,b')=(\beta,\alpha)$. In the second equality, if $b\neq b'$, then $\alpha+\beta$ has both the low representation $\alpha+\beta$ and the high representation $b+b'-m$, contrary to $\alpha+\beta\notin C(B)$. If $b=b'$, then $2b=m+\alpha+\beta$, so there is at most one further possible residue. Hence
\[
  |T_3\cap T_4|\leq 2.
\]

Thus the total pairwise overlap is at most $8$ if $m$ is even and at most $7$ if $m$ is odd. Also $0$ lies in none of the sets $T_i$, so
\[
  \left|\bigcup_{i=1}^4T_i\right|\leq m-1.
\]
By the elementary union bound in the form
\[
  \sum_{i=1}^4 |T_i|
  \leq \left|\bigcup_{i=1}^4T_i\right|+
  \sum_{1\leq i<j\leq 4}|T_i\cap T_j|,
\]
we get $4|B|-2\leq m+7$ if $m$ is even and $4|B|-2\leq m+6$ if $m$ is odd. Thus $4|B|\leq m+9$ in the even case and $4|B|\leq m+8$ in the odd case. Since $4|B|$ is divisible by $4$, the even case also gives $4|B|\leq m+8$. Hence in all cases
\[
  |B|\leq \frac m4+2.
\]
Since $|C(B)|\geq 0$, this proves
\[
  |B|-|C(B)|\leq \frac m4+2.
\]
Together with the induction case, this proves the lemma.
\end{proof}

\section{Folding a triple-free set around a pivot}

Call a set $A\subseteq[1,N]$ \emph{triple-free} if it contains no pairwise-sum triple. Let $A$ be triple-free and let $h\in A$. Define
\[
  X=\{r:1\leq r<h,\ r\in A\},
\]
\[
  Y=\{r:1\leq r<h,\ h+r\leq N,\ h+r\in A\}.
\]
Put
\[
  B_h=X\cap Y,
  \qquad
  E=[1,h-1]\setminus(X\cup Y).
\]
Thus $B_h\subseteq\{1,\ldots,h-1\}$, and $r\in B_h$ means that both $r$ and $h+r$ lie in $A$.

\begin{lemma}[Folding lemma]\label{lem:folding}
One has
\[
  |X|+|Y|\leq \frac54h-|E\setminus C(B_h)|+1.
\]
In particular,
\[
  |X|+|Y|\leq \frac54h+1.
\]
\end{lemma}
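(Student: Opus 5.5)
The plan is to reduce the folding lemma to \Cref{lem:folded} with $m=h$ and $B=B_h$. The reduction has three parts: an inclusion--exclusion identity, a check that $B_h$ satisfies \eqref{eq:foldedOK}, and the containment $C(B_h)\subseteq E$.

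\textbf{The identity.} Since $X,Y\subseteq[1,h-1]$ and $E$ is the complement of $X\cup Y$ there, we have $|X\cup Y|=h-1-|E|$. With $X\cap Y=B_h$ this gives
\[
  |X|+|Y|=|X\cup Y|+|X\cap Y|=h-1-|E|+|B_h|.
\]
Using $|E|=|E\setminus C(B_h)|+|E\cap C(B_h)|$, the claimed inequality is equivalent to
\[
  |B_h|\leq \frac h4+2+|E\cap C(B_h)|.
\]
If we show $C(B_h)\subseteq E$, then $|E\cap C(B_h)|=|C(B_h)|$, and the bound above is exactly \cref{lem:folded}. The case $h=1$ is trivial, since there $B_h=X=Y=E=\varnothing$, so we assume $h\geq 2$.

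\textbf{The key observation.} Let $x,y\in B_h$ be distinct. Then $x,y,h$ are distinct elements of $A$, because $x,y<h$. Moreover $x+h$ and $y+h$ lie in $A$, since $x,y\in Y$. Triple-freeness therefore forces
\[
  x+y\notin A.
\]
Note that $x+y<x+h\leq N$, so $x+y$ is a genuine element of $[1,N]$. We translate this into the folded conditions.
\begin{itemize}
  \item $x+y\neq h$, because $h\in A$.
  \item If $x+y<h$, then $x+y\notin X$, and in particular $x+y\notin B_h$.
  \item If $x+y>h$, then $r=x+y-h\in[1,h-1]$ satisfies $h+r=x+y\leq N$ and $h+r\notin A$. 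So $r\notin Y$, and in particular $r\notin B_h$.
\end{itemize}
Thus \eqref{eq:foldedOK} holds with $m=h$, and \cref{lem:folded} applies to $B_h$.

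\textbf{The containment $C(B_h)\subseteq E$.} Take $c\in C(B_h)$. Its low representation $c=x+y$ with $x+y<h$ gives $c\notin X$ by the second bullet. Its high representation $c=x'+y'-h$ gives $c\notin Y$ by the third bullet. Hence $c\in[1,h-1]\setminus(X\cup Y)=E$.

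Combining this with the identity and \cref{lem:folded} yields the first inequality. The ``in particular'' statement follows by dropping the nonnegative term $|E\setminus C(B_h)|$.

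I expect no real obstacle here. The only points needing care are that $x$, $y$ and $h$ are distinct, that $x+y\leq N$ so that membership in $Y$ makes sense, and the bookkeeping that routes wrapped sums through $Y$ and unwrapped sums through $X$.
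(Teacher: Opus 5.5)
Your proposal is correct and follows essentially the same route as the paper. You check \eqref{eq:foldedOK} for $B_h$ modulo $h$ using the triple $x,y,h$, show $C(B_h)\subseteq E$ by routing the low representation through $X$ and the high one through $Y$, and combine the identity $|X|+|Y|=h-1-|E|+|B_h|$ with \Cref{lem:folded}.
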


\begin{proof}
For $h=1$ there is nothing to prove, so assume $h\geq 2$. Take distinct $x,y\in B_h$. Then
\[
  x,y,h,x+h,y+h\in A.
\]
If $x+y=h$, then $x,y,h$ form a forbidden triple. If $x+y\equiv r\pmod h$ for some $r\in B_h$, then either $x+y=r$ or $x+y=h+r$; in either case $x+y\in A$, and again $x,y,h$ form a forbidden triple. Hence $B_h$ satisfies the hypotheses of Lemma~\ref{lem:folded} modulo $h$.

We next show that
\begin{equation}\label{eq:collisions-empty}
  C(B_h)\subseteq E.
\end{equation}
If $r\in C(B_h)$, then $r$ has a low representation $r=x+y$ with distinct $x,y\in B_h$ and a high representation $h+r=u+v$ with distinct $u,v\in B_h$. If $r\in X$, then $r\in A$, and $x,y,h$ form a forbidden triple. Thus $r\notin X$. If $r\in Y$, then $h+r\in A$, and $u,v,h$ form a forbidden triple. Thus $r\notin Y$. Therefore $r\in E$.

Now
\[
  |X|+|Y|=|X\cup Y|+|X\cap Y|=(h-1-|E|)+|B_h|.
\]
Using \eqref{eq:collisions-empty} and Lemma~\ref{lem:folded},
\[
  |X|+|Y|=h-1+|B_h|-|C(B_h)|-|E\setminus C(B_h)|
  \leq \frac54h-|E\setminus C(B_h)|+1.
\]
The final assertion follows by discarding the nonnegative term $|E\setminus C(B_h)|$.
\end{proof}

\section{The upper bound}\label{sec:upper-bound}

It suffices to prove the theorem for even $N$, since for odd $N$ one may embed $[1,N]$ into $[1,N+1]$, changing the desired bound only by $O(1)$. Thus let $N=2H$. We prove, by strong induction on $H$, that every triple-free $A\subseteq[1,2H]$ satisfies
\begin{equation}\label{eq:even-target}
  |A|\leq \frac54H+6.
\end{equation}
The case $H=1$ is trivial. Assume $H\geq 2$ and that \eqref{eq:even-target} is known for all smaller positive values of $H$. Let $A\subseteq[1,2H]$ be triple-free.

If $A\cap[H,2H]=\varnothing$ or $A\cap[1,H]=\varnothing$, then $|A|\leq H$, so we may assume both intersections are nonempty. Define
\[
  q=H+s=\min(A\cap[H,2H]),
\]
and
\[
  p=H-e=\max(A\cap[1,H]).
\]
Then $s,e\geq 0$, and the open interval $(p,q)$ contains no element of $A$.

\subsection{The case \texorpdfstring{$s\leq 4e$}{s <= 4e}}
Fold around
\[
  h=q=H+s.
\]
Let
\[
  I=\{r\in\Z:\max(p,N-h)<r<h\}.
\]
If $s\geq 1$, then
\[
  |I|=s+\min(s,e)-1,
\]
and in all cases
\[
  |I|\geq s+\min(s,e)-1.
\]
For each $\rho\in I$, one has $\rho>p$ and $\rho<h=q$, so $\rho\notin A$ and hence $\rho\notin X$. Also $\rho>N-h$, so $h+\rho>N$, and hence $\rho\notin Y$. Thus $I\subseteq E$.

Moreover, $I\cap C(B_h)=\varnothing$. Indeed, every element $b\in B_h$ satisfies $b\leq N-h=H-s$. Hence every high folded sum has residue at most
\[
  2(N-h)-h=H-3s,
\]
whereas every $\rho\in I$ satisfies $\rho>N-h=H-s$. Therefore no $\rho\in I$ lies in $S_1(B_h)$.

Thus $I\subseteq E\setminus C(B_h)$. Since $|E\setminus C(B_h)|\geq |I|$, Lemma~\ref{lem:folding} gives
\[
  |X|+|Y|\leq \frac54h-|I|+1.
\]
Since $h=q\geq H$, every element of $A$ is counted by $X$, by $Y$, or is one of $h$ and possibly $2h$; the latter can occur only if $s=0$. If $s=0$, then $I=\varnothing$, and the preceding bound gives $|A|\leq \frac54H+3$. If $s\geq 1$, then there is only the endpoint $h$, and the preceding estimates give
\[
  |A|\leq \frac54(H+s)-s-\min(s,e)+3
  =\frac54H+\frac s4-\min(s,e)+3.
\]
If $e\geq s$, the last additional term is at most $s/4-s\leq 0$. If $e<s$, then $s\leq 4e$ gives $s/4-e\leq 0$. Hence \eqref{eq:even-target} follows in this case.

\subsection{The case \texorpdfstring{$s>4e$}{s > 4e}}
Fold around
\[
  h=p=H-e.
\]
Let
\[
  g=q-p=s+e.
\]
If $g>h$, then $g>H-e$. Since $g=s+e>5e$, we have $e<g/5$, and hence $H<g+e<6g/5$. Thus $g>5H/6$. The gap $(p,q)$ is empty, so
\[
  |A|\leq p+(2H-q+1)=2H-g+1<\frac76H+1\leq \frac54H+6.
\]
Thus we may assume $g\leq h$. Put
\[
  t=g-1.
\]
For every $1\leq r\leq t$, the upper partner $h+r=p+r$ lies in the gap $(p,q)$ and therefore is not in $A$. Hence the folded coordinate set satisfies
\[
  B_h\cap[1,t]=\varnothing.
\]
Let
\[
  A_0=A\cap[1,t],
\]
and put
\[
  I=[1,t]\setminus A_0.
\]
Every element of $I$ lies in $E$, and since $B_h\cap[1,t]=\varnothing$, no $\rho\leq t$ can be a low sum of two distinct elements of $B_h$. Hence
\[
  I\subseteq E\setminus C(B_h).
\]
Since $|E\setminus C(B_h)|\geq |I|$, Lemma~\ref{lem:folding} gives
\[
  |X|+|Y|\leq \frac54h-|I|+1.
\]
The part of $A$ up to $2h$ is counted by $X$, by $Y$, or by the two possible endpoints $h$ and $2h$. The tail beyond $2h$ has length $2H-2h=2e$. Consequently,
\begin{equation}\label{eq:case2-bound}
  |A|\leq \frac54h+2e-|I|+3
  =\frac54H+\frac34e+3-|I|.
\end{equation}
If $e\leq 3$, then \eqref{eq:case2-bound} and $|I|\geq 0$ give
\[
  |A|\leq \frac54H+6.
\]
Thus assume $e\geq 4$.

Since $A_0$ is triple-free and $t\leq h-1\leq H-1$, the induction hypothesis applies to $A_0$ after embedding $[1,t]$ in the even interval $[1,2\lceil t/2\rceil]$. Hence
\[
  |A_0|\leq \frac54\left\lceil\frac t2\right\rceil+6\leq \frac58(t+1)+6,
\]
and therefore
\[
  |I|=t-|A_0|\geq \frac38t-\frac{53}{8}.
\]
Since $s>4e$, we have $s\geq 4e+1$, and hence
\[
  t=s+e-1\geq 5e.
\]
Substituting the lower bound for $|I|$ into \eqref{eq:case2-bound}, we obtain
\[
  |A|\leq \frac54H+6+\left(\frac{29}{8}+\frac34e-\frac38t\right)\leq \frac54H+6,
\]
because $t\geq 5e$ and $e\geq 4$. This proves \eqref{eq:even-target}.

Combining the two cases, the induction is complete. By the initial reduction to even $N$, this proves \Cref{thm:main}.

\section{Sharpness}

For $N=8M$, set
\[
  A=[M,2M]\cup[4M,8M].
\]
Then
\[
  |A|=(M+1)+(4M+1)=5M+2=\frac58N+2.
\]
This set is triple-free. Two elements from the upper interval have sum greater than $N$. Two distinct elements from the lower interval have sum strictly between $2M=N/4$ and $4M=N/2$, hence outside $A$. Therefore any three distinct elements of $A$ contain either two upper elements, whose sum is too large, or two lower elements, whose sum is outside $A$. Thus no pairwise-sum triple exists in $A$.

This shows that the constant $5/8$ in \Cref{thm:main} is best possible.
Stijn Cambie has suggested the sharper conjectural formula that the maximum triple-free size is
\[
\left\lfloor \frac58N+2\right\rfloor-\mathbf 1_{N\equiv 5\pmod 8}
\]
for all \(N\), apart from the small exceptional values \(N\in\{1,2,13,14,21\}\). He also noted that the expected lower-bound constructions are obtained from the example above and its residue-class restrictions.

\section*{Acknowledgements}
This note was drafted by GPT-5.5 Pro. The mathematical findings and proof strategy are due to Ricky Cipollini together with GPT-5.5 Pro. The author thanks Stijn Cambie (\url{https://www.erdosproblems.com/forum/user/StijnC}) for feedback and improvements to the paper. The author also thanks Aristotle for the Lean 4/Mathlib formalization of the reduction. The formalization is available at \url{https://github.com/mrricky22/erdos-865-lean}.

\end{document}